\newtheorem{thm}{Theorem}
\newtheorem{lemma}{Lemma}
\newtheorem{cor}{Corollary}
\newtheorem{defi}{Definition}
\newtheorem{prp}{Property}
\DeclareMathOperator{\sinc}{sinc}
\DeclareMathOperator{\res}{res}
\DeclareMathOperator{\pv}{p.v.}
\DeclareMathOperator{\im}{Im}
\title{An Identity Involving Integration with Respect\\to Variable Order of Fractional Derivative}
\author{Ivan Matychyn}
\date{}
\begin{document}

\maketitle

\section{Introduction}

In 1993, Samko and Ross \cite{SamRoss} introduced the study of fractional integration and differentiation when the order is not a constant but a function. This suggestion gave rise to a number of further ideas and results \cite{malinowska2015,sun2009,valerio2011}. In particular, this implies a possibility of integration with respect to derivative's order. Here an identity is presented, in which an expression involving Riemann--Liouville fractional derivative is integrated with respect to the derivative's order.

\section{Preliminaries}

\begin{defi}
Suppose $ f:[a,\infty)\to\mathbb{R}$ is an absolutely continuous function. The Riemann--Liouville (left-sided) fractional integral and derivative of order $\alpha$, $ m-1 < \alpha <m$, $ m\in\mathbb{N} $, are defined as follows:
\begin{align*}
J_{a+}^\alpha f(t) &= \frac{1}{\Gamma(\alpha)}\int_{a}^{t} (t-\tau)^{\alpha-1}f(\tau)d\tau,\ t> a,  \\
D_{a+}^\alpha f(t) &= \frac{d^m}{dt^m} J_{a+}^{m-\alpha} f(t),\quad t>a.
\end{align*}
In what follows we will omit the lower limit of integration in the notation if it is equal to zero, i.e. $ J^\alpha f(t) \triangleq J_{0+}^\alpha f(t) $, $ D^\alpha f(t) \triangleq D_{0+}^\alpha f(t)$.
\end{defi}

Here $ \Gamma(\alpha)$ denotes Euler's Gamma function.

\begin{prp}[Property 2.1 \cite{SamKilMar}] \label{prp1}
Let $ \alpha, \beta > 0 $. Then the following identity holds:
\begin{equation*}
D_a^\alpha (t-a)^{\beta-1}=\frac{\Gamma(\beta)}{\Gamma(\beta-\alpha)} (t-a)^{\beta - \alpha -1}.
\end{equation*}
\end{prp}

In particular, it follows from Proposition \ref{prp1}, that
\begin{align}
D^\alpha 1 &= \frac{t^{-\alpha}}{\Gamma(1-\alpha)}, \label{eq:1}\\
D^\alpha t^{n-1} &=\frac{\Gamma(n)}{\Gamma(n - \alpha)}t^{n - \alpha - 1}, \quad n\in \mathbb{N}. \label{eq:5}
\end{align}

\begin{defi}
The sinc function (``Cardinal Sine'') is defined as follows:
\begin{equation*}
\sinc (x) = 
\begin{cases}
\frac{\sin \pi x}{\pi x}, & x\ne 0,\\
1, & x=0.
\end{cases}
\end{equation*}
\end{defi}

\begin{prp} \label{prp1.5}
\begin{equation}
\int_{-\infty}^{\infty} \sinc(x) dx =1.
\end{equation}
\end{prp}

The following properties of Euler's Gamma function will be used in the sequel.

\begin{prp} \label{prp2}
\begin{equation} \label{eq:2}
\Gamma(1+\alpha)\Gamma(1-\alpha)=\alpha\Gamma(\alpha)\Gamma(1-\alpha)=\frac{\pi\alpha}{\sin(\pi\alpha)} = \frac{1}{\sinc(\alpha)},
\end{equation}
\end{prp}

\begin{prp} \label{prp3}
\begin{equation*}
\Gamma(z+n)  = (z)_n \Gamma(z),
\end{equation*}
where $(z)_n = z(z+1)\ldots (z+n-1)$ is the \textsl{Pochhammer symbol}.
\end{prp}

The following theorem  allowing to calculate some improper integrals with the help of contour integrals in the complex plane will also be used in the sequel.
\begin{thm}[Indented Trigonometric Integrals \cite{complex}] \label{contour}
Assume that $ P(z) $, $ Q(z) $, $ z\in \mathbb{C} $, are polynomials with real coefficients of degree  $ m $ and $ n $,  respectively, where $ n\ge m+1 $ and that  $ Q(z)$  has simple zeros at the points  $ t_1, \ldots, t_L $  on the $ x$-axis. If $p$ is a positive real number, and if  $ f(z)=\frac{e^{ipz}P(z)}{Q(z)}$, then we can compute the Cauchy Principal Value (P.V.) of the following integral
\begin{equation*}
\pv \int_{-\infty}^{\infty} \frac{P(x)}{Q(x)}\sin(px)dx=\im \left( 2\pi i \sum_{j=1}^{k} \res_{z=z_j} f(z)+  \pi i \sum_{j=1}^{L} \res_{z=t_j} f(z)\right).
\end{equation*}
\end{thm}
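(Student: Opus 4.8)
The plan is to apply the residue theorem to the meromorphic function $f(z)=\frac{e^{ipz}P(z)}{Q(z)}$ over a suitably indented contour in the upper half-plane. First I would fix a large radius $R$ exceeding the moduli of all poles of $f$ together with a small radius $\varepsilon$, and build the positively oriented closed contour $\gamma$ consisting of the segment of the real axis from $-R$ to $R$, but detouring around each real simple zero $t_j$ of $Q$ by a small semicircular arc of radius $\varepsilon$ lying in the upper half-plane, closed off by the large semicircle $C_R=\{Re^{i\theta}:0\le\theta\le\pi\}$ from above. Since $f$ is holomorphic inside $\gamma$ except at the poles $z_1,\dots,z_k$ of $f$ lying in the open upper half-plane, the residue theorem gives $\oint_\gamma f(z)\,dz=2\pi i\sum_{j=1}^{k}\res_{z=z_j}f(z)$.

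Next I would analyze the pieces of $\oint_\gamma$ separately. The integral over the large arc $C_R$ tends to $0$ as $R\to\infty$: this is exactly Jordan's lemma, whose hypotheses hold here because $p>0$ forces $|e^{ipz}|=e^{-p\im z}\le 1$ on $C_R$, while $n\ge m+1$ guarantees $\left|\frac{P(z)}{Q(z)}\right|=O(R^{-1})\to 0$. For each small arc around $t_j$, traversed clockwise as a part of the positively oriented $\gamma$, the standard fractional-residue estimate for a simple pole yields a contribution tending to $-\pi i\,\res_{z=t_j}f(z)$ as $\varepsilon\to 0$. The remaining portion of the real axis converges, by definition, to the Cauchy Principal Value $\pv\int_{-\infty}^{\infty}f(x)\,dx$.

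Combining these limits gives the intermediate identity $\pv\int_{-\infty}^{\infty}f(x)\,dx=2\pi i\sum_{j=1}^{k}\res_{z=z_j}f(z)+\pi i\sum_{j=1}^{L}\res_{z=t_j}f(z)$. Finally I would take imaginary parts of both sides. Since $P$ and $Q$ have real coefficients, $\frac{P(x)}{Q(x)}$ is real for real $x$, whence $\im f(x)=\frac{P(x)}{Q(x)}\sin(px)$ and $\im\left(\pv\int_{-\infty}^{\infty}f(x)\,dx\right)=\pv\int_{-\infty}^{\infty}\frac{P(x)}{Q(x)}\sin(px)\,dx$, which is precisely the claimed formula.

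The main obstacle I anticipate is the rigorous treatment of the small semicircular arcs. One must use the Laurent expansion $f(z)=\frac{\res_{z=t_j}f}{z-t_j}+g(z)$ near each simple pole, with $g$ holomorphic and hence bounded nearby, to split the arc integral into a principal part converging to $-\pi i\,\res_{z=t_j}f$ (the factor $\pi i$ being half of the full $2\pi i$, and the minus sign arising from the clockwise orientation) plus a remainder that vanishes as $\varepsilon\to 0$; correctly pinning down this orientation sign and verifying the remainder estimate uniformly over all $L$ indentations is the delicate point. The Jordan's-lemma bound on $C_R$ is routine once the degree condition $n\ge m+1$ is invoked.
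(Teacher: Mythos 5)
Your argument is correct and is the standard one: indent the real axis over each simple real pole, apply the residue theorem to the closed contour, kill the large arc via Jordan's lemma (which is where $p>0$ and $n\ge m+1$ enter), let each clockwise indentation contribute $-\pi i\,\res_{z=t_j}f(z)$ in the limit $\varepsilon\to 0$, and finally take imaginary parts using that $P(x)/Q(x)$ is real on the axis. Note that the paper gives no proof of this theorem at all --- it is imported as a known result from the cited textbook \cite{complex} --- so there is no internal argument to compare against; your proof is precisely the textbook one, with the delicate points (orientation sign on the indentations, the fractional-residue estimate) correctly identified and handled.
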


\section{The Main Result}

\begin{lemma} \label{lemma}
For every $ n=1,2,\ldots $ the following identity holds true:
\begin{equation}
\int_{-\infty}^{\infty}\frac{t^\alpha}{\Gamma(\alpha + 1)} (D^\alpha t^{n-1}) d\alpha = (2t)^{n-1}.
\end{equation}
\end{lemma}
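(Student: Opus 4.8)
The plan is to collapse the whole statement to a single numerical integral by exploiting the cancellation of the powers of $t$, and then to evaluate that integral through the residue formula for indented trigonometric integrals (Theorem~\ref{contour}).

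First I would substitute the closed form \eqref{eq:5} for $D^\alpha t^{n-1}$ into the integrand, extending it to all real orders $\alpha$ by reading $D^\alpha$ as fractional integration when $\alpha\le 0$, where the same power rule persists. Writing $D^\alpha t^{n-1}=\frac{\Gamma(n)}{\Gamma(n-\alpha)}t^{n-\alpha-1}$, the factor $t^\alpha\cdot t^{n-\alpha-1}=t^{n-1}$ is independent of $\alpha$, so the integrand becomes $\frac{\Gamma(n)}{\Gamma(\alpha+1)\Gamma(n-\alpha)}\,t^{n-1}$ and the claim reduces to showing
\begin{equation*}
\Gamma(n)\int_{-\infty}^{\infty}\frac{d\alpha}{\Gamma(\alpha+1)\Gamma(n-\alpha)}=2^{n-1}.
\end{equation*}

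Next I would rewrite the Gamma-function kernel using the properties already recorded. By Property~\ref{prp3} with $z=1-\alpha$ one has $\Gamma(n-\alpha)=(1-\alpha)_{n-1}\Gamma(1-\alpha)$, and by the reflection identity \eqref{eq:2}, $\Gamma(\alpha+1)\Gamma(1-\alpha)=1/\sinc(\alpha)$. Hence
\begin{equation*}
\frac{1}{\Gamma(\alpha+1)\Gamma(n-\alpha)}=\frac{\sinc(\alpha)}{(1-\alpha)_{n-1}}=\frac{1}{\pi}\cdot\frac{\sin\pi\alpha}{Q(\alpha)},\qquad Q(\alpha)=\alpha(1-\alpha)\cdots(n-1-\alpha),
\end{equation*}
where $Q$ is a degree-$n$ polynomial with simple real zeros at $0,1,\dots,n-1$. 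Crucially $\sin\pi\alpha$ vanishes at each such zero, so the integrand is continuous and the improper integral is a genuine (not merely principal-value) integral, which also secures convergence for $n\ge 2$; the case $n=1$ reduces directly to $\int_{-\infty}^{\infty}\sinc(\alpha)\,d\alpha=1$ by Property~\ref{prp1.5}. Then I would apply Theorem~\ref{contour} with $P\equiv 1$, the above $Q$, and $p=\pi$. Since $Q$ has no zeros off the real axis, only the boundary residues contribute, and with $f(z)=e^{i\pi z}/Q(z)$ a direct differentiation gives $Q'(j)=(-1)^{j}\,j!\,(n-1-j)!$, whence $\res_{z=j}f=\frac{(-1)^j}{Q'(j)}=\frac{1}{j!\,(n-1-j)!}$. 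These residues are real, so the theorem yields
\begin{equation*}
\int_{-\infty}^{\infty}\frac{d\alpha}{\Gamma(\alpha+1)\Gamma(n-\alpha)}=\frac{1}{\pi}\,\im\!\left(\pi i\sum_{j=0}^{n-1}\frac{1}{j!\,(n-1-j)!}\right)=\sum_{j=0}^{n-1}\frac{1}{j!\,(n-1-j)!}=\frac{2^{n-1}}{(n-1)!},
\end{equation*}
the last equality being the binomial theorem after inserting $(n-1)!/(n-1)!$. Multiplying by $\Gamma(n)=(n-1)!$ gives $2^{n-1}$, and restoring the factor $t^{n-1}$ produces $(2t)^{n-1}$, as required.

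The main obstacle I anticipate is the residue bookkeeping: one must track the signs in $Q'(j)$ carefully (the factor $\alpha$ at $j=0$ behaves differently from the factors $j-\alpha$, yet the uniform formula $Q'(j)=(-1)^j j!\,(n-1-j)!$ still holds) and verify that this alternating sign exactly cancels the $(-1)^j=e^{i\pi j}$ from the exponential, leaving clean positive residues whose sum is a full row of Pascal's triangle. A secondary point needing care is justifying that the principal value delivered by Theorem~\ref{contour} equals the ordinary integral, which follows from the removability of the singularities noted above.
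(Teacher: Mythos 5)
Your proof is correct and follows essentially the same route as the paper: substitute \eqref{eq:5} so the powers of $t$ cancel, rewrite the Gamma kernel as $\frac{\sin(\pi\alpha)}{\pi\,\alpha(1-\alpha)_{n-1}}$ via Properties \ref{prp2} and \ref{prp3}, evaluate by Theorem \ref{contour} with residues $\frac{1}{j!\,(n-1-j)!}$ summing to $\frac{2^{n-1}}{(n-1)!}$, and treat $n=1$ separately through Property \ref{prp1.5}. The only differences are cosmetic: you apply the residue theorem directly to the degree-$n$ polynomial rather than computing the integral with $(1-\alpha)_n$ and then replacing $n$ by $n-1$ as the paper does, and you make explicit some points the paper leaves implicit (the sign bookkeeping in $Q'(j)$ and the fact that the principal value coincides with the ordinary integral).
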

\begin{proof}
For $ n=1$, in view of Properties \ref{prp2} and \ref{prp1.5}, we have  
\begin{equation*} 
\int_{-\infty}^{\infty} \frac{t^\alpha}{\Gamma(1+\alpha)} (D^\alpha 1) d\alpha=\int_{-\infty}^{\infty} \frac{\sin(\pi\alpha)}{\pi \alpha}d\alpha = \int_{-\infty}^{\infty} \sinc(\alpha) d\alpha = 1.
\end{equation*}

Now suppose $ n=2,3,\ldots $ Then, by virtue of Properties \ref{prp2} and \ref{prp3}, we have
\begin{equation*} 
\Gamma(n-\alpha)\Gamma(1+\alpha) = (1-\alpha)_{n-1} \Gamma(1-\alpha) \Gamma(1+\alpha)=(1-\alpha)_{n-1} \frac{\pi\alpha}{\sin(\pi\alpha)},
\end{equation*}
hence
\begin{equation} \label{eq:6}
\begin{aligned} 
\int_{-\infty}^{\infty} \frac{t^\alpha}{\Gamma(1+\alpha)} (D^\alpha t^{n-1}) d\alpha  
&= t^{n-1} \int_{-\infty}^{\infty} \frac{\Gamma(n)}{\Gamma(n - \alpha)\Gamma(1+\alpha)}d\alpha\\
&= t^{n-1} \int_{-\infty}^{\infty} \binom{n-1}{\alpha} d\alpha\\
&=\frac{t^{n-1}}{\pi}  (n-1)!\int_{-\infty}^{\infty} \frac{\sin(\pi\alpha)}{\alpha(1-\alpha)_{n-1}}d\alpha.
\end{aligned}
\end{equation}
It follows from Theorem \ref{contour} that
\begin{equation}
\begin{aligned}
\int_{-\infty}^{\infty} \frac{\sin(\pi\alpha)}{\alpha(1-\alpha)_{n}}d\alpha &= \int_{-\infty}^{\infty} \frac{\sin(\pi\alpha)}{\alpha(1-\alpha)(2-\alpha)\ldots (n-\alpha)}d\alpha \\
&= \im \left( \pi i \sum_{j=0}^{n} \res_{z=j} \frac{e^{i\pi z}}{z(1-z)(2-z)\ldots (n-z)} \right).
\end{aligned}
\end{equation}
Since
\begin{align*}
\res_{z=0} \frac{e^{i\pi z}}{z(1-z)(2-z)\ldots (n-z)} &= \frac{1}{n!},\\
\res_{z=j} \frac{e^{i\pi z}}{z(1-z)(2-z)\ldots (n-z)} &= \frac{1}{n!}\binom{n}{j},\ j=1,\ldots, n.
\end{align*}
we have
\begin{equation}
\int_{-\infty}^{\infty} \frac{\sin(\pi\alpha)}{\alpha(1-\alpha)_{n}}d\alpha = \frac{\pi}{n!}\sum_{j=0}^{n} \binom{n}{j}=\frac{\pi 2^n}{n!}.
\end{equation}
Thus
\begin{equation*}
\int_{-\infty}^{\infty} \frac{t^\alpha}{\Gamma(1+\alpha)} (D^\alpha t^{n-1}) d\alpha=\frac{t^{n-1}}{\pi}(n-1)!\frac{\pi 2^{n-1}}{{n-1}!}=(2t)^{n-1}.
\end{equation*}
\end{proof}

\begin{cor}
\begin{equation*}
\int_{-\infty}^{\infty} \binom{n}{\alpha} d\alpha = 2^n,\quad n\in\mathbb{N}.
\end{equation*}
\end{cor}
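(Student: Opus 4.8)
The plan is to read the statement off directly from Lemma~\ref{lemma}, since the integral in question already surfaces as an intermediate quantity inside that proof. Recall that for a real lower index the binomial coefficient is understood through the Gamma function as $\binom{n}{\alpha}=\frac{\Gamma(n+1)}{\Gamma(n-\alpha+1)\,\Gamma(\alpha+1)}$. With this convention the middle step of the computation \eqref{eq:6} in the proof of Lemma~\ref{lemma} records, for every integer $n\ge 2$, the identity
\begin{equation*}
\int_{-\infty}^{\infty}\frac{t^{\alpha}}{\Gamma(1+\alpha)}\,(D^{\alpha}t^{n-1})\,d\alpha
= t^{n-1}\int_{-\infty}^{\infty}\binom{n-1}{\alpha}\,d\alpha .
\end{equation*}

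First I would invoke the conclusion of Lemma~\ref{lemma}, namely that the left-hand side equals $(2t)^{n-1}=2^{n-1}t^{n-1}$. Equating the two expressions and cancelling the nonzero factor $t^{n-1}$ (any fixed $t>0$ will do) leaves
\begin{equation*}
\int_{-\infty}^{\infty}\binom{n-1}{\alpha}\,d\alpha = 2^{n-1}.
\end{equation*}
Setting $m=n-1$ and letting $n$ range over $\{2,3,\dots\}$ makes $m$ range over all of $\mathbb{N}$, which is exactly the asserted formula $\int_{-\infty}^{\infty}\binom{m}{\alpha}\,d\alpha=2^{m}$. Should the convention $0\in\mathbb{N}$ be in force, the missing case $m=0$ coincides with the base case $n=1$ of the lemma, where $\binom{0}{\alpha}=\sinc(\alpha)$ by Property~\ref{prp2} and the integral equals $1=2^{0}$ by Property~\ref{prp1.5}.

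I do not expect any genuine obstacle here: the analytic heart of the matter---the contour/residue evaluation of $\int_{-\infty}^{\infty}\frac{\sin(\pi\alpha)}{\alpha(1-\alpha)_{n}}\,d\alpha$ via Theorem~\ref{contour}, together with the collapse $\sum_{j=0}^{n}\binom{n}{j}=2^{n}$---has already been carried out in proving Lemma~\ref{lemma}. The only point demanding a word of care is the cancellation of $t^{n-1}$: it is legitimate precisely because the lemma's identity holds for every $t>0$, so one may simply divide through by the common power of $t$. If instead one wished to establish the corollary in isolation, reproducing that residue computation would be the main piece of work.
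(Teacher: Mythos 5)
Your proof is correct and is exactly the paper's argument: the paper proves this corollary in one line, citing Lemma~\ref{lemma} together with the intermediate identity \eqref{eq:6}, which is precisely the combination you spell out (equate $(2t)^{n-1}$ with $t^{n-1}\int_{-\infty}^{\infty}\binom{n-1}{\alpha}d\alpha$, cancel $t^{n-1}$, and reindex). Your extra remarks on the reindexing and on the $n=1$ base case just make explicit what the paper leaves implicit.
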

This Corollary follows from Lemma \ref{lemma} and \eqref{eq:6}.

\begin{cor}[Main Identity]
For any function $ f(t)$ that is analytic in some neighborhood of zero $(-\varepsilon,\varepsilon)$, $ \varepsilon>0 $, the following identity holds true
\begin{equation}
\int_{-\infty}^{\infty}\frac{t^\alpha}{\Gamma(\alpha + 1)} [D^\alpha f(t/2)]d\alpha = f(t),\quad t\in (-\varepsilon,\varepsilon).
\end{equation}
\end{cor}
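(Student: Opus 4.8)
The plan is to reduce the corollary to Lemma \ref{lemma} by expanding $f$ in its Taylor series about the origin and then integrating term by term. Since $f$ is analytic on $(-\varepsilon,\varepsilon)$, I would write $f(x)=\sum_{k=0}^{\infty}c_k x^k$ with $c_k=f^{(k)}(0)/k!$, a series whose radius of convergence is at least $\varepsilon$ and which therefore converges uniformly on each compact subset of the interval. Putting $x=t/2$ and reindexing by $n=k+1$ gives $f(t/2)=\sum_{n=1}^{\infty}c_{n-1}\,2^{-(n-1)}\,t^{n-1}$. Because $t^\alpha$ and $D^\alpha=D_{0+}^\alpha$ are only meaningful for $t>0$, I would carry out the computation on $(0,\varepsilon)$ and regard the passage to the full stated interval as a matter of analytic continuation of the right-hand side.

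First I would apply the Riemann--Liouville derivative $D^\alpha$ to this series term by term. Writing $D^\alpha=\frac{d^m}{dt^m}J^{m-\alpha}$, the operator $J^{m-\alpha}$ is integration against the locally integrable kernel $(t-\tau)^{m-\alpha-1}/\Gamma(m-\alpha)$, so uniform convergence of the power series permits interchanging $J^{m-\alpha}$ with the summation; since each $J^{m-\alpha}t^{n-1}$ is again a power-type function and the resulting series still possesses a positive radius of convergence, the outer $d^m/dt^m$ may likewise be taken under the sum. This yields, for $t\in(0,\varepsilon)$,
\[
D^\alpha f(t/2)=\sum_{n=1}^{\infty}c_{n-1}\,2^{-(n-1)}\,D^\alpha t^{n-1}.
\]

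Next I would multiply by $t^\alpha/\Gamma(\alpha+1)$, integrate over $\alpha\in(-\infty,\infty)$, and interchange the $\alpha$-integral with the sum. By Lemma \ref{lemma} the $n$-th term becomes $c_{n-1}2^{-(n-1)}\int_{-\infty}^{\infty}\frac{t^\alpha}{\Gamma(\alpha+1)}(D^\alpha t^{n-1})\,d\alpha=c_{n-1}2^{-(n-1)}(2t)^{n-1}=c_{n-1}t^{n-1}$, and summing over $n$ recovers $\sum_{n=1}^{\infty}c_{n-1}t^{n-1}=f(t)$, as asserted.

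The crux of the argument, and the step I expect to be the main obstacle, is the rigorous justification of this last interchange. By \eqref{eq:6} the $n$-th integrand equals $c_{n-1}t^{n-1}\binom{n-1}{\alpha}$, and since $\alpha(1-\alpha)_{n-1}\sim(-1)^{n-1}\alpha^{n}$ its tail decays like $\sin(\pi\alpha)/\alpha^{n}$; for $n\ge 2$ this is absolutely integrable, but the $n=1$ contribution reduces to the merely conditionally convergent $\int_{-\infty}^{\infty}\sinc(\alpha)\,d\alpha$. I would therefore split off the constant term (handling it exactly as the $n=1$ case of the Lemma) and justify term-by-term integration of the absolutely convergent tail by dominated convergence, which in turn requires a growth estimate showing that $\int_{-\infty}^{\infty}\bigl|\binom{n-1}{\alpha}\bigr|\,d\alpha$ grows no faster than $2^{n-1}$ times a polynomial in $n$. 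Combined with $|t|<\varepsilon$ and the radius of convergence of $f$, such a bound makes $\sum_{n}|c_{n-1}|\,2^{-(n-1)}|t|^{n-1}\int_{-\infty}^{\infty}\bigl|\binom{n-1}{\alpha}\bigr|\,d\alpha$ converge, thereby licensing the interchange and completing the proof.
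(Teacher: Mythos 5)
Your proposal is correct and follows what is clearly the intended route: expand $f$ in its Taylor series about zero, substitute $t/2$, and apply Lemma \ref{lemma} term by term, so it matches the paper's approach (the paper in fact states this corollary with no proof at all, leaving the term-by-term argument implicit). Your additional care --- splitting off the conditionally convergent $n=1$ term and bounding $\int_{-\infty}^{\infty}\bigl|\binom{n-1}{\alpha}\bigr|\,d\alpha$ by $O(2^{n-1}\sqrt{n})$ (which does hold, since $\bigl|\binom{n-1}{\alpha}\bigr|$ is maximized at $\alpha=(n-1)/2$ on $[0,n-1]$ and is integrable with a bound independent of $n$ outside that interval) --- supplies exactly the justification of the interchange of sum and integral that the paper omits.
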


\bibliographystyle{unsrt}

\end{document}